\newtheorem{thm}{Theorem}[section]
\newtheorem{prop}[thm]{Proposition}
\newtheorem{df}[thm]{Defintion}
\newtheorem{lem}[thm]{Lemma}
\newtheorem{ex}[thm]{Example}
\newtheorem*{thm1}{Theorem A}
\def\N{\mathbb{N}}
\def\N{\mathbb{N}}
\def\XX{\mathcal{X}}
\def\EE{\mathcal{E}}
\def\diam{\text{\rm diam}}
\def\mdim{\text{\rm mdim}}
\def\mdim{\text{\rm mdim}}
\def\over{\overline { \rm mdim}_{M}(T,\XX,d)}
\def\under{\underline{ \rm mdim}_{M}(T,\XX,d)}
\def\logf{\log\frac{1}{\epsilon}}
\numberwithin{equation}{section}
\title{Some notes on variational principle for metric mean dimension}
\author{Rui Yang$^1$, Ercai Chen$^1$ and Xiaoyao Zhou*$^1$
}
\address
{1.School of Mathematical Sciences and Institute of Mathematics, Nanjing Normal University, Nanjing 210023, Jiangsu, P.R.China}
\email{zkyangrui2015@163.com}
\email{ecchen@njnu.edu.cn}
\email{zhouxiaoyaodeyouxian@126.com}
\date{}
\begin{document}
	
\maketitle

\renewcommand{\thefootnote}{}
\footnote{2020 \emph{Mathematics Subject Classification}: 37A05,  37A35, 37B40, 94A34.}
\footnotetext{\emph{Key words and phrases}: Variational principle; Metric mean dimension; Mean dimension;  Rate distortion dimension; R$\bar{e}$nyi information dimension.}
\footnote{*corresponding author}

\begin{abstract}
	
   Firstly, we  answer the problem 1  asked by Gutman and  $\rm \acute{\ S}$piewak in \cite{gs20}, then we establish a double  variational  principle for mean dimension in terms of R$\bar{e}$nyi information dimension and show   the order of $\sup$ and $\limsup$ (or $\liminf$) of the  variational principle for  the  metric mean dimension   in terms of   R$\bar{e}$nyi information dimension obtained by  Gutman and  $\rm \acute{\ S}$piewak  can be  changed under the marker property.  Finally, we attempt to  introduce the notion  of  maximal metric mean dimension measure, which is an analogue of the concept called classical maximal entropy measure related to the topological entropy.
\end{abstract}

\section{Introduction}

    By a pair $(\XX,T)$ we mean a topological dynamical system (TDS for short), where $\XX$ is a  compact metrizable  topological space  and $T: \XX \rightarrow \XX$ is a continuous self-map. The set of metrics on $\XX$  compatible with the topology is denoted by $\mathscr D(\XX)$. 
    By $M(\XX),M(\XX,T),E(\XX,T)$  we denote  the sets of all Borel probability measures on $\XX$, all $T$-invariant Borel probability measures on $\XX$, all ergodic measures on $\XX$, respectively.
    
    Mean topological dimension introduced  by Gromov \cite{gromov} is a  new  topological  invariant  in  topological dynamical systems, which was  systematically studied by  Lindenstrauss and Weiss \cite{lw00}. They also  introduced a notion called metric mean dimension  to capture the  topological complexity of infinite topological entropy systems and revealed  a  well-known result that  metric mean dimension  is an upper bound of mean topological dimension.  
    
    One says that a compact metric space  $(\XX, d)$ admits  \emph{tame growth of covering numbers} if for each $\theta>0$, $$\lim_{\epsilon \to 0}\epsilon^{\theta}\log r_1(T,d,\epsilon,\XX)=0,$$
    where $ r_1(T,d,\epsilon,\XX)$ denotes the minimal cardinality of  $\XX$ covered by   the balls $B(x,\epsilon):=\{y\in \XX:d(x,y)<\epsilon\}$.
    
    In 2018,  to inject ergodic
    theoretic ideas into mean dimension theory, Lindenstrauss  and Tsukamoto  \cite{lt18} established a  variational principle for metric mean dimension  in terms of rate distortion dimensions. Namely,
    
    \begin{thm1}\label{thm A}
    Let $(\XX,T)$ be a TDS with a metric $d$. Then
    \begin{align*}
    	\overline{{mdim}}_{M}(T,\XX,d)=\limsup_{\epsilon \to 0}\frac{\sup_{\mu \in M(\XX,T)}R_{\mu,L^{\infty}}(\epsilon)}{\logf}.
    \end{align*}
   Additionally, if $(\XX,d)$ has tame growth of covering numbers, then   for $p\in[1,\infty)$,

  \begin{align*}
	\overline{{mdim}}_{M}(T,\XX,d)=\limsup_{\epsilon \to 0}\frac{\sup_{\mu \in M(\XX,T)}R_{\mu,p}(\epsilon)}{\logf},
   \end{align*}
  where $\over$  denotes upper metric mean dimension of $\XX$, $R_{\mu,p}(\epsilon),R_{\mu,L^{\infty}}(\epsilon)$ are called the $L^p$  and  $L^{\infty}$ rate distortion function, respectively. Moreover, the above  two results  are  valid for  lower metric mean dimension $\underline{{mdim}}_{M}(T,\XX,d)$ by changing $\limsup$ into $\liminf$.
   \end{thm1}
    
   The  amenable version of Theorem A was proved by Chen et al. \cite{cdz22} by using abundant non-trivial  quasi-tiling methods. Besides,
   Gutman and  $\rm \acute{S}$piewak \cite{gs20} showed that the second  variational principle in Theorem A  can  only range  over all ergodic measures and
    posed   a problem  \cite[Problem 1]{gs20}  if the first  variational principle in Theorem A can only range over all ergodic measures. In this paper, we  give  a positive  answer to this problem.
  \begin{thm}\label{thm 1.1}
   Let $(\XX,T)$ be a TDS with a metric $d$.Then 
   	\begin{align*}
   		\over&=\limsup_{\epsilon \to 0}\frac{\sup_{\mu \in E(\XX,T)}R_{\mu,L^{\infty}}(\epsilon)}{\logf},\\
   		\under&=\liminf_{\epsilon \to 0}\frac{\sup_{\mu \in E(\XX,T)}R_{\mu,L^{\infty}}(\epsilon)}{\logf}.
   	\end{align*}
   \end{thm}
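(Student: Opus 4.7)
The inequality
\[
\limsup_{\epsilon\to 0}\frac{\sup_{\mu\in E(\XX,T)}R_{\mu,L^{\infty}}(\epsilon)}{\logf}\leq \over
\]
is immediate from Theorem A and the inclusion $E(\XX,T)\subset M(\XX,T)$, and symmetrically for the $\liminf$ version. The task is therefore the reverse inequality, and my plan is to prove the sharper scale-by-scale identity
\[
\sup_{\mu\in M(\XX,T)}R_{\mu,L^{\infty}}(\epsilon)=\sup_{\nu\in E(\XX,T)}R_{\nu,L^{\infty}}(\epsilon)
\]
for every $\epsilon>0$. Dividing by $\logf$ and invoking Theorem A then yields both statements of the theorem at once.

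To establish this identity I would prove an ergodic decomposition inequality for the $L^{\infty}$ rate distortion function: if $\mu=\int \mu_x\, d\mu(x)$ is the ergodic decomposition of $\mu\in M(\XX,T)$, then
\[
R_{\mu,L^{\infty}}(\epsilon)\leq \int R_{\mu_x,L^{\infty}}(\epsilon)\, d\mu(x)\leq \sup_{\nu\in E(\XX,T)}R_{\nu,L^{\infty}}(\epsilon).
\]
The strategy is to work at the $n$-block level: one selects, measurably in $x$, a near-optimal $\epsilon$-coupling $(X_x^n,Y_x^n)$ with $X_x^n$ distributed as the $n$-th marginal of $\mu_x$, satisfying $\max_{0\leq k<n}d(X_{x,k}^n,Y_{x,k}^n)\leq \epsilon$ almost surely, and with mutual information $I(X_x^n;Y_x^n)$ close to $nR_{\mu_x,L^{\infty}}(\epsilon)$. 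Sampling the ergodic index $x$ first and then $(X_x^n,Y_x^n)$ produces a joint law whose $X^n$-marginal is the $n$-th marginal of $\mu$ and whose $L^{\infty}$ distortion is still bounded pointwise by $\epsilon$. If $W$ denotes the random ergodic index, the chain rule $I(X^n;Y^n)\leq I(X^n;Y^n\mid W)+I(X^n;W)$ together with $I(X^n;Y^n\mid W)=\int I(X_x^n;Y_x^n)\,d\mu(x)$ reduces the argument to showing $\frac{1}{n}I(X^n;W)\to 0$.

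The principal obstacle is accordingly twofold: (i) carrying out the measurable selection of $\epsilon$-couplings against the ergodic decomposition in a manner that preserves the $L^{\infty}$ constraint, and (ii) controlling the auxiliary term $\frac{1}{n}I(X^n;W)$. Issue (i) is genuinely easier than its $L^p$ analogue because the pointwise (worst-case) nature of $L^{\infty}$ is preserved under any convex combination of couplings; this presumably explains why the present theorem requires no tame growth of covering numbers hypothesis. For (ii) the plan is to approximate the Choquet simplex $M(\XX,T)$ by finite partitions so that $W$ becomes discrete with $H(W)<\infty$, giving $\frac{1}{n}I(X^n;W)\leq \frac{1}{n}H(W)\to 0$, and then to pass to the limit using upper semicontinuity of $R_{\cdot,L^{\infty}}(\epsilon)$ in the source measure. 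This is the information-theoretic analogue of the classical ergodic decomposition of Kolmogorov--Sinai entropy, and I expect it to be tractable with techniques parallel to those already used in the $L^p$ setting.
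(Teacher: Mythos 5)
Your reduction of the theorem to the reverse inequality is fine, and the easy direction via Theorem A is exactly as in the paper. The difficulty is entirely in your proposed scale-by-scale identity $R_{\mu,L^{\infty}}(\epsilon)\leq\int R_{\mu_x,L^{\infty}}(\epsilon)\,d\mu(x)$, and the sketch you give for it has a genuine gap at step (ii). The identity $I(X^n;Y^n\mid W)=\int I(X^n_x;Y^n_x)\,d\mu(x)$ requires $W$ to be the \emph{full} ergodic index; but then $W$ is ($\mu$-a.e.) a measurable function of $\xi$, so $I(X^n;W)=H(W)$, which is infinite whenever the ergodic decomposition is non-atomic, and dividing by $n$ does not help. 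If instead you coarsen $W$ to a finite partition of the simplex of ergodic components so that $H(W)<\infty$, then conditionally on $W=j$ the source is a non-ergodic mixture and the coupling is a mixture of the component couplings, so $I(X^n;Y^n\mid W=j)$ is \emph{not} the average of the $I(X^n_x;Y^n_x)$ over the cell --- bounding it by that average is exactly the problem you started with, now for the conditional measure. The argument is circular. The appeal to ``upper semicontinuity of $R_{\cdot,L^{\infty}}(\epsilon)$ in the source'' does not close this: first, you would need \emph{lower} semicontinuity at $\mu$ along your approximating finite combinations to pass the bound to $\mu$ (u.s.c.\ gives the inequality the wrong way); second, neither semicontinuity is available here, because the $L^{\infty}$ distortion constraint is an expectation of the discontinuous function $\mathbf{1}[d(\cdot,\cdot)\geq\epsilon]$ and is not stable under weak$^*$ perturbation of the source. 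This is precisely why Gutman and \'Spiewak could settle the $L^p$ case but left the $L^{\infty}$ case as an open problem: no affinity or harmonicity of $\mu\mapsto R_{\mu,L^{\infty}}(\epsilon)$ over the ergodic decomposition is known.

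The paper circumvents the ergodic decomposition entirely, and it only proves the weaker (but sufficient) asymptotic statement, losing a constant factor in $\epsilon$ that is absorbed by the normalization $\logf$. Concretely, it first upgrades the variational principle to Katok's entropy over ergodic measures, $\overline{mdim}_M(T,\XX,d)=\limsup_{\epsilon\to0}\sup_{\mu\in E(\XX,T)}\overline{h}^K_\mu(T,\epsilon)/\logf$ (Theorem 3.3, using \cite[Proposition 7.3]{shi}); then, for each \emph{ergodic} $\mu$, it proves $\overline{h}^K_\mu(T,\epsilon)\leq PS_\mu(T,d,\epsilon)\leq R_{\mu,L^{\infty}}(\epsilon/6)$ (Lemma 3.2), where the first inequality uses Birkhoff's theorem through the generic points of $\mu$ and the second uses the Lindenstrauss--Tsukamoto lemma converting $(n,6\epsilon)$-separated sets into a lower bound on $R_{\nu,L^{\infty}}(\epsilon)$ for a weak$^*$ limit $\nu$ of empirical measures (Lemma 3.1). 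Ergodicity enters only through $\mu(G_\mu)=1$, never through a decomposition of a non-ergodic measure. If you want to pursue your route, you would need a Gray--Davisson-type ergodic decomposition theorem for the $L^{\infty}$ rate distortion function of stationary sources over a compact alphabet, which is a substantially harder (and, to my knowledge, unresolved) statement than the theorem you are asked to prove.
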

    
   Given $\mu \in M(\XX,T)$,  recall that lower and upper R$\bar{e}$nyi information dimensions of $\mu$ are respectively given by 
    \begin{align*}
     	\underline{MRID}(\XX,T,\mu,d)&=\liminf_{\epsilon \to 0}\frac{1}{\logf}\inf_{\diam P \leq \epsilon}h_\mu(T,P),\\
    	\overline{MRID}(\XX,T,\mu,d)&=\limsup_{\epsilon \to 0}\frac{1}{\logf}\inf_{\diam P \leq \epsilon}h_\mu(T,P),
    \end{align*}
   where the infimums range over all  finite partitions of $\XX$ with diameter at most $\epsilon$, and $h_\mu(T,P)$ denotes the measure-theoretic entropy  of $\mu$ with respect  to  $T$ and $P$,  its precise definition  is given in  section 2.

    One says that a  topological dynamical system $(\XX, T)$ admits \emph{marker property}  if for any $N>0$ there exists an open set $U\subset \XX$  with property that  
   $$U\cap T^n U=\emptyset, 1\leq n\leq N,  \mbox{and}~\XX=\cup_{n\in \mathbb{Z}} T^n U.$$
   
    After establishing variational principle  for metric mean dimension,  Lindenstrauss and Tsukamoto \cite{lt19}  further established  a double variational principle for mean dimension  in terms of $L^1$-rate distortion dimension under marker property, which can be  considered an important link between mean dimension  theory and ergodic theory. Later, Tsukamoto \cite{t20} extended this result to mean dimension with potential.  In this  paper,   replacing $L^1$-rate distortion dimension by  R$\bar{e}$nyi information dimension, we also  establish a double  variational  principle for mean dimension in terms of R$\bar{e}$nyi information dimension and  show   the order of $\sup$ and $\limsup$ (or $\liminf$) of the  variational principle for  the  metric mean dimension   in terms of   R$\bar{e}$nyi information dimension obtained by  Gutman and  $\rm \acute{\ S}$piewak  can be  changed under marker property.
     \begin{thm}\label{thm 1.3}
     Let $(\XX,T)$ be a TDS admitting marker property. Then  
      \begin{align*}
      	\mdim(\XX,T)&=\min_{d\in \mathscr{D}(\XX)} \sup_{\mu \in M(\XX,T)}\underline{MRID}(\XX,T,\mu,d),\\
      	&=\min_{d\in \mathscr{D}(\XX)} \sup_{\mu \in M(\XX,T)}\overline{MRID}(\XX,T,\mu,d),
      \end{align*}
      and  for any $d\in \mathscr{D}^{'}(\XX)$,  
      \begin{align*}
      	\under&=\over\\
      	&=\sup_{\mu \in M(\XX,T)}\liminf_{\epsilon \to 0}\frac{1}{\logf} \inf_{\diam P \leq \epsilon} \limits h_\mu(T,P)\\
      	&=\sup_{\mu \in M(\XX,T)}\limsup_{\epsilon \to 0}\frac{1}{\logf} \inf_{\diam P \leq \epsilon} \limits h_\mu(T,P)\\
      	&=\liminf_{\epsilon \to 0} \sup_{\mu \in M(\XX,T)}\frac{1}{\logf} \inf_{\diam P \leq \epsilon} \limits h_\mu(T,P)\\
      	&=\limsup_{\epsilon \to 0} \sup_{\mu \in M(\XX,T)}\frac{1}{\logf} \inf_{\diam P \leq \epsilon} \limits h_\mu(T,P),
      	\end{align*}
  $\mdim(\XX,T)$  denotes the mean dimension, see \cite[Subsection 1.2]{t20} for its explicit definition, and  $\mathscr{D}^{'}(\XX)=\{d\in \mathscr{D}(\XX): mdim(\XX,T)=\over\}$.   	
   \end{thm}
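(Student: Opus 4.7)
The plan rests on three ingredients: (i) the Lindenstrauss--Tsukamoto double variational principle expressing $\mdim(\XX,T)$ as $\min_{d}\sup_{\mu}\underline{\mathrm{rdim}}_{L^{1}}(\mu,d)$ and as $\min_{d}\sup_{\mu}\overline{\mathrm{rdim}}_{L^{1}}(\mu,d)$ under the marker property, whose proof in addition supplies a metric $d^{*}\in\mathscr{D}^{'}(\XX)$ realising the minimum; (ii) the Gutman--\'Spiewak variational principle identifying $\overline{\mathrm{mdim}}_{M}(T,\XX,d)$ and $\underline{\mathrm{mdim}}_{M}(T,\XX,d)$ with the outer $\limsup_{\epsilon\to 0}\sup_{\mu}$ and $\liminf_{\epsilon\to 0}\sup_{\mu}$ of $\frac{1}{\logf}\inf_{\diam P\leq\epsilon}h_{\mu}(T,P)$; and (iii) the elementary sandwich
$$R_{\mu,L^{1}}(\epsilon)\leq R_{\mu,L^{\infty}}(\epsilon)\leq \inf_{\diam P\leq\epsilon}h_{\mu}(T,P),$$
whose right-hand inequality follows by selecting one representative per cell of a partition of diameter $\leq\epsilon$, yielding a deterministic $L^{\infty}$-coupling of distortion $\leq\epsilon$ with information rate bounded by $h_{\mu}(T,P)$. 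Dividing (iii) by $\logf$ and passing to $\limsup/\liminf$ gives $\overline{\mathrm{rdim}}_{L^{1}}(\mu,d)\leq \overline{MRID}(\XX,T,\mu,d)$ and $\underline{\mathrm{rdim}}_{L^{1}}(\mu,d)\leq \underline{MRID}(\XX,T,\mu,d)$ pointwise in $d$.

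For the first block of equalities, combining (i) with these bounds immediately yields
$$\mdim(\XX,T)\leq \min_{d}\sup_{\mu}\underline{MRID}(\XX,T,\mu,d)\leq \min_{d}\sup_{\mu}\overline{MRID}(\XX,T,\mu,d).$$
For the reverse, I apply the elementary Fatou-type inequality $\sup_{\mu}\limsup_{\epsilon}\leq\limsup_{\epsilon}\sup_{\mu}$ to (ii) at the metric $d=d^{*}$ supplied by (i), obtaining $\sup_{\mu}\overline{MRID}(\XX,T,\mu,d^{*})\leq\overline{\mathrm{mdim}}_{M}(T,\XX,d^{*})=\mdim(\XX,T)$. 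Hence $\min_{d}\sup_{\mu}\overline{MRID}\leq\mdim(\XX,T)$, the chain collapses, and both minima are attained at $d^{*}$.

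For the second block, fix $d\in\mathscr{D}^{'}(\XX)$. By definition $\over=\mdim(\XX,T)$, and the Lindenstrauss--Weiss inequality $\mdim\leq\under\leq\over$ forces $\under=\mdim(\XX,T)$ as well. Ingredient (ii) identifies the last two lines of the display ($\liminf_{\epsilon}\sup_{\mu}$ and $\limsup_{\epsilon}\sup_{\mu}$) with $\under$ and $\over$, so they also equal $\mdim(\XX,T)$. The remaining two middle lines are squeezed: from the first part, $\sup_{\mu}\underline{MRID}(\mu,d)\geq \min_{d}\sup_{\mu}\underline{MRID}=\mdim(\XX,T)$; from the Fatou bound applied at $d$, $\sup_{\mu}\overline{MRID}(\mu,d)\leq\over=\mdim(\XX,T)$; and the trivial monotonicity $\underline{MRID}\leq\overline{MRID}$ closes the remaining gap. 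All six quantities therefore equal $\mdim(\XX,T)$.

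The main obstacle is really lodged inside ingredient (ii), i.e.\ the Gutman--\'Spiewak identification: one must check that the sandwich (iii) remains sharp after division by $\logf$, so that the inner $\inf_{\diam P\leq\epsilon}h_{\mu}(T,P)$ can legitimately replace the $L^{\infty}$-rate distortion function in Theorem A at the level of dimensions. Once (ii) is taken for granted, the entire theorem is a diagram of inequalities that collapses precisely because the marker property furnishes the optimal metric $d^{*}$ at which every upper bound meets the lower bound $\mdim(\XX,T)$.
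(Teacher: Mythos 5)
Your proposal is correct and follows essentially the same route as the paper: the lower bound via Tsukamoto's easy direction $\mdim(\XX,T)\leq\sup_{\mu}\underline{rdim}_{L^1}(\XX,T,d,\mu)$ combined with the partition-representative bound $R_{\mu,L^{\infty}}\leq\inf_{\diam P\leq\epsilon}h_{\mu}(T,P)$ (the paper's Proposition 3.4), the upper bound via the Gutman--\'Spiewak identity plus the $\sup\limsup\leq\limsup\sup$ interchange, and the collapse at the optimal metric furnished by Tsukamoto's Theorem 1.8 under the marker property. The only cosmetic difference is that the paper writes $R_{\mu,L^{\infty}}(2\epsilon)\leq\inf_{\diam P\leq\epsilon}h_{\mu}(T,P)$ to avoid the boundary case $d(T^k\xi,\eta_k)=\epsilon$, a discrepancy that disappears after dividing by $\logf$.
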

  We would like to remark that if a TDS admits marker property,  Tsukamoto \cite[Theorem 1.8]{t20} showed  there exists $d\in \mathscr{D}(\XX)$ such that  $mdim(\XX,T)=\over$, this implies $\mathscr{D}^{'}(\XX)$ is  not empty.

\section{Preliminary}
  
In this section,  we  recall the definition of metric mean  dimension and collect  several types of measure-theoretic entropies  for forthcoming proofs.

\subsection{Metric mean dimension}
Let $n\in \mathbb{N}$, 	for $x,y \in \XX$, we define the $n$-th Bowen metric $d_n$  on $\XX$ as 
$$d_n(x,y):=\max_{0\leq j\leq n-1}d(T^{j}(x),T^j(y)).$$

For a non-empty subset  $Z\subset \XX$. A set $E\subset \XX$ is  \emph{an $(n,\epsilon)$-\emph {spanning set} of $Z$} if  for any $x \in Z$, there  exists  $y\in E$ such that $d_n(x,y)<\epsilon.$ The smallest cardinality  of $(n,\epsilon)$-spanning set of $Z$ is denoted by $r_n(T,d,\epsilon,Z)$.
A set $F\subset Z$ is  \emph{an $(n,\epsilon)$-separated set of $Z$} if $d_n(x,y)\geq\epsilon$ for any $x,y \in F$ with $x\not= y$. The  largest  cardinality of $(n,\epsilon)$-separated set of $Z$  is denoted by $s_n(T,d,\epsilon,Z)$.

Put
$$r(T, \XX,d,\epsilon)=\limsup_{n\to \infty} \frac{1}{n} \log r_n(T,d,\epsilon, \XX)$$
and
$$s(T, \XX,d,\epsilon)=\limsup_{n\to \infty} \frac{1}{n} \log s_n(T,d,\epsilon,\XX).$$

By a standard method  used in \cite{w82}, we  have
$r(T, \XX,d,\epsilon)\leq s(T, \XX,d,\epsilon) \leq r(T, \XX,d,\frac{\epsilon}{2}).$

Let $(\XX,T)$ be a TDS, we  define \emph{the  upper metric mean dimension}  of $T$ on $\XX$  as
$$\overline{mdim}_{M}(T,\XX,d):=\limsup_{\epsilon\to 0}\frac{r(T,\XX,d,\epsilon)}{\log \frac{1}{\epsilon}}= \limsup_{\epsilon\to 0}\frac{s(T,\XX,d,\epsilon)}{\log \frac{1}{\epsilon}}.
$$

Similarly, one can define \emph{the lower metric mean dimension} $\under$ by  replacing  \emph{$\limsup$} with  \emph{$\liminf$}. If $\under=\over$, we call the common value   denoted by $mdim_M(T,\XX,d)$ \emph{metric mean dimension}.

\subsection{Measure-theoretical entropy}
Let $P$ be a   partition of $\XX$ and $\mu \in M(\XX,T)$, then the partition entropy of $P$ is given by 

$$H_\mu(P)=\sum_{A\in P}-\mu(A)\log \mu(A),$$  
here we use the convention that $\log=\log_e$ and $0\cdot \infty=0$.

Let $P, Q$ be two finite partitions of $\XX$, the join of $P$ and $Q$ is defined by $P \vee Q:=\{A\cap B: A\in P, B \in Q\}$.
Set $P^n:=\vee_{j=0}^{n-1}T^{-j}P$, and we define the the measure-theoretic entropy of $\mu$ with respect  to  $T$ and $P$  as
$$h_\mu(T,P)=\lim_{n\to \infty}\frac{1}{n}H_\mu(P^n).$$
The measure-theoretic entropy  of $\mu$ is defined by
$$h_\mu(T)=\sup_{P}h_\mu(T,P),$$
where $P$ ranges over all finite partitions of $\XX$.
\subsection{Rate distortion theory}

  Let $(\Omega,\mathbb{P})$ be a  probability space and $\XX$ and $\mathcal{Y}$ be  measurable spaces, and  let $\xi:\Omega \rightarrow \XX$ and $\eta: \Omega \rightarrow \mathcal{Y}$ be  measurable maps. We define the $\emph{mutual information}$ $I(\xi;\eta)$  as the supremum of 
 \begin{align*}
 	\sum_{1\leq m\leq  M,\atop 1\leq n \leq  N}\mathbb{P}(\xi \in P_m,\eta \in Q_n)\log \frac{\mathbb{P}(\xi \in P_m,\eta \in Q_n)}{\mathbb{P}(\xi \in P_m)\mathbb{P}(\eta \in Q_n)},
 \end{align*}
where $\{P_1,...,P_M\}$ and $\{Q_1,...,Q_N\}$  are partitions of $\XX$  and $\mathcal{Y}$ respectively, here we use  the convention that $0\log \frac{0}{a}=0$ for all $a\geq0$.

A measurable map  $\xi: \Omega \rightarrow \XX$ with finite image  naturally associates a  finite partition on $\Omega$, the $\emph{preimage partition}$ of $\Omega$. In this case we denote the entropy of  $\xi$ by $H(\xi)$.

If  $\XX$ and $\mathcal{Y}$ are finite sets, then $I(\xi;\eta)$ is given by 
\begin{align*}
  &\sum_{x\in\XX,y\in \mathcal{Y}}\mathbb{P}(\xi =x,\eta =y)\log \frac{\mathbb{P}(\xi =x,\eta =y)}{\mathbb{P}(\xi =x)\mathbb{P}(\eta =y)},\\
&=H(\xi)-H(\xi|\eta)=H(\xi)+H(\eta)-H(\xi\vee \eta),
\end{align*}
where $H(\xi|\eta)$ is  the conditional entropy of $\xi$ given  $\eta$, the value shows  the amount of information which  the random variables $\xi$ and $\eta$ share.

Let $(\XX,T)$ be a TDS  with a metric $d$ and $\mu \in M(\XX,T)$. Given $1\leq p<\infty$ and $\epsilon >0$, 
we define the  $\emph{$L^{p}$-rate distortion function}$  $R_{\mu,p}(\epsilon)$ as the infimum of
 $$\frac{I(\xi;\eta)}{n},$$
 where $n$ runs over  all natural numbers, and $\xi$ and $\eta=(\eta_0,...,\eta_{n-1})$  are random variables  defined on   some probability space $(\Omega, \mathbb{P})$ such that
 \begin{enumerate}
 	\item $\xi$ takes values in $\XX$, and its law is given by $\mu$.
 	\item Each $\eta_k$ takes  values in $\XX$, and 
 	\begin{align*}
 		\mathbb{E}\left(\frac{1}{n}\sum_{k=0}^{n-1}d(T^k\xi,\eta_k)^p\right)<\epsilon^p,
 	\end{align*}
 \end{enumerate} 
 where $\mathbb{E}(\cdot)$  is  the  expectation with respect to  the probability  measure $\mathbb{P}$.

Let $s>0$,
we define the  $\emph{$L^\infty$-rate distortion function}$  $R_{\mu,L^\infty}(\epsilon,s)$ as the infimum of
$$\frac{I(\xi;\eta)}{n},$$
where $n$ runs over  all natural numbers, and $\xi$ and $\eta=(\eta_0,...,\eta_{n-1})$  are random variables  defined on   some probability space $(\Omega, \mathbb{P})$ such that
\begin{enumerate}
	\item $\xi$ takes values in $\XX$, and its law is given by $\mu$.	
	\item Each $\eta_k$ takes  values in $\XX$, and 
	\begin{align*}
	\mathbb{E}\left( \text{the number of } 0\leq k\leq n-1~\text{with}~d(T^k\xi,\eta_k)\geq \epsilon\right)<sn,
	\end{align*}	
	 
\end{enumerate} 
and we set $R_{\mu,L^{\infty}}(\epsilon)=\lim_{s\to 0}\limits R_{\mu,L^{\infty}}(\epsilon,s)$.

In above two  definitions,  Lindenstrauss  and Tsukamoto \cite[Section IV, reamrk 14]{lt18}  showed that  the infimums can  only consider  random variable $\eta$ taking  finitely  many values,  and they  also showed \cite[III,B]{lt18}  that  for $1\leq p< \infty$, $R_{\mu,p}(\epsilon)\leq R_{\mu,L^\infty}(\epsilon^{'})$ holds for $0<\epsilon^{'}\leq\epsilon$.

We define the lower and upper $L^P$ rate distortion dimensions as 
\begin{align*}
\underline{rdim}_{L^p}(\XX,T,d,\mu)&=\liminf_{\epsilon \to 0}\frac{R_{\mu,p}(\epsilon)}{\logf}\\
\overline{rdim}_{L^p}(\XX,T,d,\mu)&=\limsup_{\epsilon \to 0}\frac{R_{\mu,p}(\epsilon)}{\logf}.
\end{align*}
 Replacing   $R_{\mu,p}(\epsilon)$ with $R_{\mu,L^{\infty}}(\epsilon)$, one can similarly define  lower and upper $L^{\infty}$ rate distortion dimensions.
 
 \subsection{Brin-Katok local entropy} 
 Measure-theoretic entropy is  given from the  viewpoint  of  the local perspective.
 Let  $\mu \in \mathcal{M}(X)$.
 Define 
 
 \begin{align*}
 	\overline{h}_{\mu}^{BK}(T, \epsilon):&=\int \overline{h}_{\mu}^{BK}(T, x,\epsilon)d\mu	\\
 	\underline{h}_{\mu}^{BK}(T, \epsilon):&=\int \underline{h}_{\mu}^{BK}(T, x,\epsilon)d\mu,
 \end{align*}
 where 
 \begin{align*}
 	\overline{h}_{\mu}^{BK}(T, x,\epsilon)=\limsup_{n\to \infty}-\frac{\log \mu (B_n(x,\epsilon))}{n}\\
 	\underline{h}_{\mu}^{BK}(T, x,\epsilon)=\liminf_{n\to \infty}-\frac{\log \mu (B_n(x,\epsilon))}{n}.
 \end{align*}

 If $\mu \in M(\XX,T)$,  Brin and Katok \cite{k80} showed 
 $$\lim_{\epsilon\to 0}\underline{h}_{\mu}^{BK}(T, \epsilon)=\lim_{\epsilon\to 0}\overline{h}_{\mu}^{BK}(T, \epsilon)=h_\mu(T).$$

\subsection{Katok's entropy}
Measure-theoretic entropy  defined by spanning set.

Let $\mu \in  \mathcal{M}(\XX)$, $\epsilon>0$ and $n \in\N$.  Given $\delta \in (0,1)$ and
put
$$R_\mu^\delta(T,n, \epsilon):=\min\{\#E: \mu (\cup_{x\in E}B_n(x,\epsilon))> 1-\delta \}.$$
Define 
\begin{align*}
	\overline{h}_{\mu}^K(T,\epsilon, \delta)&=\limsup_{n\to \infty} \frac{1}{n} \log R_\mu^\delta(T,n, \epsilon)\\
	\underline{h}_{\mu}^K(T,\epsilon, \delta)&=\liminf_{n\to \infty} \frac{1}{n} \log R_\mu^\delta(T,n, \epsilon). 
\end{align*}
If $\mu \in E(\XX,T)$, Katok \cite{k80} showed  
$$\lim_{\epsilon \to 0}\overline{h}_{\mu}^K(T,\epsilon, \delta)=\lim_{\epsilon \to 0}\underline{h}_{\mu}^K(T,\epsilon, \delta)=h_{\mu}(T).$$

We can define two quantities related to Katok's entropy by an alternative way. 

Let $\mu \in M(\XX)$. Note that the quantities $\overline{h}_{\mu}^K(T,\epsilon, \delta),\underline{h}_{\mu}^K(T,\epsilon, \delta)$ are  non-decreasing  when $\delta$ decreases. Therefore, we define 

\begin{align*}
	\overline{h}_{\mu}^K(T,\epsilon):=\lim_{\delta \to 0}\overline{h}_{\mu}^K(T, \epsilon, \delta),~~
	\underline{h}_{\mu}^K(T,\epsilon):=\lim_{\delta \to 0}\underline{h}_{\mu}^K(T,\epsilon, \delta).
\end{align*}

\subsection{Pfister and Sullivan's  entropy}
Measure-theoretic entropy  defined by separated set.
Let $\mu \in E(\XX,T)$ and $\epsilon>0$.
Define 
$$PS_\mu(T,d,\epsilon):=\inf_{F\ni \mu}\limsup_{n \to \infty}\frac{1}{n}\log s_n(T,d,\epsilon,\XX_{n,F}),$$
where  the infimum ranges over all neighborhoods of $\mu$ in $M(\XX)$ and  $\XX_{n,F}:=\{x\in \XX: \EE_n(x)=\frac{1}{n}\sum_{j=1}^{n-1} \delta_{T^{j}(x)} \in F\}$.
In fact, the  infimum can  only  range over any base of  open neighborhoods of $\mu$.

 Pfister and Sullivan  \cite{ps07} proved that $h_{\mu}(T)=\lim_{\epsilon \to 0}\limits PS_\mu(T,d,\epsilon)$.

\subsection{Generic points}
Let $\mu \in M(\XX,T)$, by 
$$G_\mu:=\{x\in \XX: \frac{1}{n}\sum_{j=0}^{n-1}\delta_{T^{j}(x)}\to \mu, n \to \infty\}$$   we denote  the set of generic points of $\mu$. Note that if  $\mu \in E(\XX,T)$, then we know  $\mu(G_\mu)=1$ by Birkhoff's  ergodic theorem.

\section{Proofs of  main results}
 
In this section, we prove Theorem \ref{thm 1.1}  and Theorem \ref{thm 1.3}.

Firstly, we  give the proof of Theorem 1.1.

The following lemma slightly modifies the statement in \cite[Lemma 4.1,(2)]{w21}, which  plays a key role in  the proof of Theorem 1.1.

\begin{lem}\label{lem  3.1}
Let $(\XX,T)$ be a TDS with a metric $d$ and $\{E_n\}_{n\geq 1}$ be a sequence non-empty  subsets of $\XX$. Let $\epsilon >0$, and let $F_n$ be an  $(n,6\epsilon)$-separated set  of $E_n$ with maximal cardinality $s_n(T,d,6\epsilon,E_n)$.  Set

$$\mu_n=\frac{1}{n\#F_n}\sum_{x\in F_n}\sum_{j=0}^{n-1}\delta_{T^{j}(x)}.$$
Choose  a subsequence $n_j$ such that $\mu_{n_j} $ convergences to $\mu\in M(\XX,T)$ in the weak* topology,
then
$$\limsup_{j\to \infty}\frac{1}{n_j}\log s_{n_j}(T,d,6\epsilon,E_{n_j})\leq R_{\mu,L^{\infty}}(\epsilon).$$
\end{lem}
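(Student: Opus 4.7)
The plan is a rate--distortion coding argument: transfer the near-optimal encoding guaranteed by the definition of $R_{\mu,L^\infty}(\epsilon)$ to the orbits of points in $F_{n_j}$, and count codewords to bound $|F_{n_j}|$.

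Fix $\tau > 0$ and a small $s > 0$ so that $R_{\mu,L^\infty}(\epsilon, s) < R_{\mu,L^\infty}(\epsilon) + \tau$, using $R_{\mu,L^\infty}(\epsilon) = \lim_{s \to 0}R_{\mu,L^\infty}(\epsilon, s)$. By the Lindenstrauss--Tsukamoto remark allowing $\eta$ to have finite image \cite[Sec. IV, Rem. 14]{lt18}, there exist $m \in \N$ and a coupling $(\xi, \eta)$ with $\xi \sim \mu$, each $\eta_k$ in a finite set $\Lambda \subset \XX$, $\mathbb{E}[\#\{k : d(T^k\xi, \eta_k) \geq \epsilon\}] < sm$, and $I(\xi;\eta)/m < R_{\mu,L^\infty}(\epsilon) + \tau$. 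A standard Shannon-style random-codebook construction (jointly-typical encoding) then produces, for each large $n$, a codebook $\mathcal{C}_n \subset \Lambda^n$ of size at most $\exp(n(R_{\mu,L^\infty}(\epsilon) + 2\tau))$ with the property that, with high probability over $\mathcal{C}_n$, a $\mu$-typical $y \in \XX$ admits a codeword in $\mathcal{C}_n$ matching the orbit $(y, Ty, \ldots, T^{n-1}y)$ within $\epsilon$ at all but $\leq 2sn$ positions.

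Transfer: the weak$^*$ convergence $\mu_{n_j} \to \mu$, applied to a continuous Lipschitz regularization of the indicator ``$y$'s orbit admits a $(2s)$-good match in $\mathcal{C}_n$'', ensures that this high probability persists on average over $x \in F_{n_j}$. A probabilistic selection over $\mathcal{C}_n$ yields a deterministic codebook such that at least $|F_{n_j}|/2$ points $x \in F_{n_j}$ possess a codeword matching their orbit within $\epsilon$ at $\geq (1 - 2s)n_j$ positions. Counting: $|\mathcal{C}_n| \leq \exp(n_j(R_{\mu,L^\infty}(\epsilon) + 2\tau))$, the bad-position subset contributes $\binom{n_j}{2sn_j} \leq \exp(n_j H(2s))$ with $H$ the binary entropy, and labeling orbit values at bad positions via an $\epsilon$-net of $\XX$ of cardinality $r_1(T,d,\epsilon,\XX)$ contributes $\exp(2sn_j \log r_1(T,d,\epsilon,\XX))$. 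Two distinct $(n_j, 6\epsilon)$-separated $x, x' \in F_{n_j}$ agreeing on all these data would satisfy $d(T^j x, T^j x') < 2\epsilon$ at every $j$ by the triangle inequality, contradicting $6\epsilon$-separation. Hence $|F_{n_j}| \leq 2\exp(n_j(R_{\mu,L^\infty}(\epsilon) + 2\tau + \gamma(s,\epsilon)))$ with $\gamma(s,\epsilon) \to 0$ as $s \to 0$; taking $\limsup_j$ and then $\tau, s \to 0$ concludes.

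The main obstacle is the transfer step: weak$^*$ convergence controls only integrals against continuous functions, whereas the relevant statistics (indicator of a successful codebook match) are discontinuous. One regularizes through a Lipschitz approximation of the indicator, paying a small $o(1)$ slack in $s$. A secondary concern is the block-to-orbit extension of the fixed-length-$m$ coupling $(\xi, \eta)$ into the length-$n$ codebook $\mathcal{C}_n$, which requires a concatenation and shift-averaging argument to invoke the empirical convergence $\mu_{n_j} \to \mu$; this is the block-level analog of the classical iid rate-distortion construction.
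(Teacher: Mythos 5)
Your argument runs the achievability (random coding / covering) direction of rate--distortion theory, whereas the lemma is proved in the paper by simply invoking the converse-style argument of Lindenstrauss--Tsukamoto \cite[Proposition 35]{lt18}: there one takes an \emph{arbitrary} admissible pair $(\xi,\eta)$ for the limit measure $\mu$, transfers it to the empirical measures $\mu_{n_j}$, and lower-bounds $I(\xi;\eta)$ by a Fano-type counting argument on the $6\epsilon$-separated set. The distinction is not cosmetic, because the step you describe as standard --- producing a codebook of rate $R_{\mu,L^{\infty}}(\epsilon)+2\tau$ that covers $\mu$-typical orbits --- is the source coding (direct) theorem, and it requires an asymptotic equipartition property along orbits. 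Here $\mu$ is only known to be invariant: it is a weak* limit of empirical measures and need not be ergodic, and for stationary non-ergodic sources the informational rate--distortion function $\inf_n\inf I(\xi;\eta)/n$ is in general \emph{not} operationally achievable by a single codebook covering typical sequences (the operational rate is governed by the ergodic components). So this step does not go through as stated, and the concatenation of the length-$m$ coupling into length-$n$ codewords that you defer to a "shift-averaging argument" is precisely where the missing AEP would be needed.

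The transfer step is a second, independent gap. Weak* convergence $\mu_{n_j}\to\mu$ controls $\int f\,d\mu_{n_j}$ for a \emph{fixed} continuous $f$, i.e., averages of $f$ over the union of all orbit points of all $x\in F_{n_j}$; the event ``the $n_j$-orbit of $x$ admits a $(2s)$-good match in $\mathcal{C}_{n_j}$'' is a function of the entire length-$n_j$ orbit and changes with $j$, so no Lipschitz regularization brings it within the scope of weak* convergence. Concretely, half of the points of $F_{n_j}$ could have orbit statistics resembling one invariant measure $\nu_1$ and the other half $\nu_2$, with $\tfrac12(\nu_1+\nu_2)=\mu$ in the limit; then a codebook designed for $\mu$ need not cover any positive fraction of $F_{n_j}$, yet $\mu_{n_j}\to\mu$ still holds. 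Your final counting step (codeword, bad-position set and $\epsilon$-net labels determine $x$ to within $2\epsilon$ in $d_{n_j}$, contradicting $6\epsilon$-separation) is correct, but it rests on these two unsupported steps. The converse-direction argument of \cite{lt18}, which bounds $I(\xi;\eta)$ from below for every admissible coupling rather than exhibiting a near-optimal code, avoids both issues and is the intended proof.
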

\begin{proof}
This lemma can be proved by repeating the proof of \cite[Proposition 35]{lt18}.
\end{proof}
For sake of readers, we slightly modify the statement in \cite[Proposition 4.3]{w21} and repeat the proof.

\begin{lem}\label{lem 3.2}
Let $(\XX,T)$ be a TDS with a metric $d$. Then  for any $\epsilon>0$ and $\mu\in E(\XX,T)$, we have 
$$ \overline{h}_{\mu}^K(T,\epsilon)\leq PS_\mu(T,d,\epsilon)\leq R_{\mu,L^{\infty}}(\frac{1}{6}\epsilon).$$
\end{lem}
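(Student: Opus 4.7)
The plan is to prove the two inequalities separately, using the ergodicity of $\mu$ in both and invoking Lemma \ref{lem  3.1} for the upper bound on the right.

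For $\overline{h}_{\mu}^K(T,\epsilon)\leq PS_\mu(T,d,\epsilon)$, I would fix $\delta > 0$ and any open neighborhood $F$ of $\mu$ in $M(\XX)$. Since $\mu$ is ergodic, Birkhoff's theorem gives $\EE_n(x) \to \mu$ in the weak$^*$ topology for $\mu$-a.e. $x$, so the sets $A_N = \{x : \EE_n(x) \in F \text{ for all } n \geq N\}$ increase to a set of full measure, and hence $\mu(\XX_{n,F}) > 1 - \delta$ for all large $n$. If $F_n$ is any maximal $(n,\epsilon)$-separated subset of $\XX_{n,F}$, then by maximality the Bowen balls $\{B_n(x,\epsilon): x \in F_n\}$ cover $\XX_{n,F}$, so $R_\mu^\delta(T,n,\epsilon) \leq |F_n| = s_n(T,d,\epsilon,\XX_{n,F})$ for large $n$. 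Taking the $\limsup$ in $n$, then the infimum over neighborhoods $F$, then $\delta \to 0$ gives the desired bound.

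For $PS_\mu(T,d,\epsilon)\leq R_{\mu,L^{\infty}}(\epsilon/6)$, I would run a diagonal argument designed so that the averaged empirical measures produced by Lemma \ref{lem  3.1} converge to $\mu$ itself. Fix a decreasing sequence of \emph{convex} open neighborhoods $V_k$ of $\mu$ with $\bigcap_k \overline{V_k} = \{\mu\}$, and set $L_k = \limsup_n \frac{1}{n}\log s_n(T,d,\epsilon,\XX_{n,V_k})$, so that $L_k$ is non-increasing with $\lim_k L_k = PS_\mu(T,d,\epsilon)$. Choose $n_k \to \infty$ with $\frac{1}{n_k}\log s_{n_k}(T,d,\epsilon,\XX_{n_k,V_k}) \geq L_k - 1/k$, let $F_{n_k}$ be a maximal $(n_k,\epsilon)$-separated subset of $E_{n_k} := \XX_{n_k,V_k}$ achieving this cardinality, and form the averaged measure $\mu_{n_k}$ as in Lemma \ref{lem  3.1}. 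Since $\mu_{n_k}$ is a convex combination of the $\EE_{n_k}(x) \in V_k$ and each $V_k$ is convex, $\mu_{n_k} \in V_k$. Passing to a weak$^*$-convergent subsequence, the limit $\nu$ lies in $\bigcap_k \overline{V_k} = \{\mu\}$, forcing $\nu = \mu$. Lemma \ref{lem  3.1} then delivers $\limsup_k \frac{1}{n_k}\log s_{n_k}(T,d,\epsilon,\XX_{n_k,V_k}) \leq R_{\mu,L^\infty}(\epsilon/6)$, and combined with $\frac{1}{n_k}\log s_{n_k}(\cdots) \geq L_k - 1/k \to PS_\mu(T,d,\epsilon)$, this yields the desired inequality.

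The main obstacle is exactly the step of forcing the weak$^*$-limit produced by Lemma \ref{lem  3.1} to be $\mu$ itself and not some nearby $T$-invariant measure, since the rate distortion function $\nu \mapsto R_{\nu,L^\infty}(\epsilon/6)$ is not known to be upper semicontinuous. Convexity of the shrinking neighborhoods $V_k$, together with the diagonal choice of scales $n_k$, is precisely what pins the limit down to $\mu$; ergodicity then ensures that each $\XX_{n,V_k}$ has positive measure for large $n$, so the maximal separated sets $F_{n_k}$ are non-empty and the construction is non-degenerate.
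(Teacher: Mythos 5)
Your proposal is correct and follows essentially the same route as the paper: the first inequality via generic points and the fact that a maximal separated set spans, and the second via shrinking convex neighborhoods whose convexity pins the weak$^*$ limit of the averaged empirical measures to $\mu$, so that Lemma \ref{lem  3.1} applies. The only cosmetic difference is that you run the second half as a direct diagonal extraction, whereas the paper phrases it as a proof by contradiction.
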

\begin{proof}
Given  $\epsilon>0$ and let $\mu  \in E(\XX,T)$.  Fix a base of  open neighborhoods $\mathcal F_\mu$ of $\mu$, if  $F \in \mathcal F_\mu$, then $G_\mu\subset \cup_{N\geq1}\cap_{n\geq N}\XX_{n,F}$. Let $\delta \in(0,1)$ and note that $\mu(G_\mu)=1$, we can  find $N_0$ such that for any $n\geq N_0$, $\mu(\XX_{n,F})> 1-\delta$. Hence, $R_\mu ^\delta (T,n,\epsilon)\leq s_n(T,d,\epsilon,\XX_{n,F})$ for any $n\geq N_0$. This implies that $	\overline{h}_{\mu}^K(T,\epsilon, \delta)\leq \limsup_{n \to \infty}\frac{1}{n}\log s_n(T,d,\epsilon,\XX_{n,F})$ holds for any $\delta \in (0,1)$. Letting $\delta \to 0$ and by the arbitrariness of $F$, we get $\overline{h}_{\mu}^K(T,\epsilon)\leq PS_\mu(T,d,\epsilon)$.

Next, given  $\epsilon>0$ and let $\mu  \in E(\XX,T)$ again, we show  $PS_\mu(T,d,6\epsilon)\leq R_{\mu,L^{\infty}}(\epsilon).$  Without loss of generality, we may assume that  $R_{\mu,L^{\infty}}(\epsilon)<\infty$. If
$$PS_\mu(T,d,6\epsilon)=\inf_{F\ni \mu}\limsup_{n \to \infty}\frac{1}{n}\log s_n(T,d,6\epsilon,\XX_{n,F})> R_{\mu,L^{\infty}}(\epsilon),$$
then we can choose $\gamma_0>0$ and a  decreasing  sequence of  closed convex  neighborhood $\{C_n\}$ of $\mu$ such that 
\begin{align}\label{equ 3.1}
\limsup_{n \to \infty}\frac{1}{n}\log s_n(T,d,6\epsilon,\XX_{n,C_n})> R_{\mu,L^{\infty}}(\epsilon)+\gamma_0
\end{align}
and $\cap_{n\geq 1}{C_n}=\{\mu\}$.

Let $F_n\subset \XX_{n,C_n}$  be an $(n,6\epsilon)$-separated set of $\XX_{n,C_n}$ with the maximal cardinality  $ s_n(T,d,6\epsilon,\XX_{n,C_n})$.  Set 
$$\mu_n=\frac{1}{n\#F_n}\sum_{x\in F_n}\sum_{j=0}^{n-1}\delta_{T^{j}(x)}.$$
Then $\mu_n \in C_n$ and $\lim\limits_{n\to \infty}\mu_n=\mu$. 

By Lemma \ref{lem  3.1}, we know that  $\limsup_{n \to \infty}\frac{1}{n}\log s_n(T,d,6\epsilon,\XX_{n,C_n})\leq R_{\mu,L^{\infty}}(\epsilon)$, which contradicts  with  the  inequality  (\ref{equ 3.1}). 

\end{proof}

\begin{thm}\label{thm 3.3}
	Let $(\XX,T)$ be a TDS with a metric $d$. Then
	\begin{align*}
		\over&=\limsup_{\epsilon \to 0}\frac{\sup_{\mu \in E(\XX,T)}\underline h_\mu^K(T,\epsilon)}{\logf}\\
		&=\limsup_{\epsilon \to 0}\frac{\sup_{\mu \in E(\XX,T)}\overline h_\mu^K(T,\epsilon)}{\logf}.
	\end{align*}
The two   variational principles are  valid for   $\under$ by changing $\limsup$ into $\liminf$ and the supremum  can   range  over all  invariant measures.
\end{thm}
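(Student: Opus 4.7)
The plan is to sandwich both $\underline h_\mu^K(T,\epsilon)$ and $\overline h_\mu^K(T,\epsilon)$ between an $L^\infty$-rate distortion function at scale $\epsilon$ and the same rate distortion function at the slightly smaller scale $\epsilon/6$, and then collapse both extremes to $\over$ via Theorem 1.1. The rightmost inequality of the sandwich is exactly Lemma 3.2, so the only genuinely new estimate needed is on the left, namely
\[
R_{\mu,L^\infty}(\epsilon)\ \leq\ \underline h_\mu^K(T,\epsilon).
\]

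To obtain this, I would argue by a coupling. Fix $\epsilon>0$, $\delta\in(0,1)$ and $n\in\N$, and let $E\subset\XX$ realize $R_\mu^\delta(T,n,\epsilon)$, so $\mu\bigl(\bigcup_{x\in E}B_n(x,\epsilon)\bigr)>1-\delta$. Take $\xi\sim\mu$ and define a finite-valued random variable $\eta=(\eta_0,\dots,\eta_{n-1})$ by choosing (measurably) an $x\in E$ with $d_n(\xi,x)<\epsilon$ whenever $\xi$ lies in the good set and setting $\eta_k=T^kx$; otherwise use any fallback value. On the good set, $d(T^k\xi,\eta_k)<\epsilon$ for every $k$, so
\[
\mathbb{E}\bigl(\#\{0\leq k\leq n-1:d(T^k\xi,\eta_k)\geq\epsilon\}\bigr)\ \leq\ n\delta.
\]
Because $\eta$ takes at most $R_\mu^\delta(T,n,\epsilon)+1$ values, $I(\xi;\eta)\leq H(\eta)\leq\log(R_\mu^\delta(T,n,\epsilon)+1)$. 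Hence for any $s>\delta$, $R_{\mu,L^\infty}(\epsilon,s)\leq\frac{1}{n}\log(R_\mu^\delta(T,n,\epsilon)+1)$; taking $\liminf_n$ yields $R_{\mu,L^\infty}(\epsilon,s)\leq\underline h_\mu^K(T,\epsilon,\delta)$, and sending $s,\delta\to0$ gives the claim. Notice this estimate is valid for any $\mu\in M(\XX,T)$, not only ergodic ones.

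Combining this with Lemma 3.2 yields, for every $\mu\in E(\XX,T)$,
\[
R_{\mu,L^\infty}(\epsilon)\ \leq\ \underline h_\mu^K(T,\epsilon)\ \leq\ \overline h_\mu^K(T,\epsilon)\ \leq\ R_{\mu,L^\infty}(\epsilon/6).
\]
Taking $\sup_{\mu\in E(\XX,T)}$, dividing by $\log(1/\epsilon)$, and passing to $\limsup_{\epsilon\to0}$, both extremes converge to $\over$ by Theorem 1.1 (using $\log(6/\epsilon)\sim\log(1/\epsilon)$ for the right end), forcing the two middle quantities to equal $\over$ as well. The $\liminf$ version for $\under$ is proved identically.

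For the assertion that the supremum may range over all invariant measures, observe that the left inequality in the sandwich holds for every $\mu\in M(\XX,T)$, so Theorem 1.1 still gives the lower bound $\over\leq\limsup_\epsilon\frac{\sup_{\mu\in M}\underline h_\mu^K(T,\epsilon)}{\log(1/\epsilon)}$. For the matching upper bound one uses the coarser but universal estimate $\overline h_\mu^K(T,\epsilon)\leq r(T,\XX,d,\epsilon)$, valid for any $\mu$, since any $(n,\epsilon)$-spanning set of $\XX$ covers full $\mu$-mass; dividing by $\log(1/\epsilon)$ and taking $\limsup$ recovers $\over$. The only delicate step is the coupling construction of $\eta$, which is a direct adaptation of the argument behind classical Shannon-style rate distortion bounds and I expect to go through without obstacle.
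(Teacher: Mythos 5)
Your sandwich $R_{\mu,L^\infty}(\epsilon)\leq \underline h_\mu^K(T,\epsilon)\leq \overline h_\mu^K(T,\epsilon)\leq R_{\mu,L^\infty}(\epsilon/6)$ is fine as a collection of inequalities: the coupling argument for the left one is sound (the selection of $x\in E$ is unproblematic since $E$ is finite, and the bookkeeping $\delta<s$ works out), and the right one is Lemma \ref{lem 3.2}. The problem is how you identify the two extremes with $\over$. You invoke Theorem \ref{thm 1.1}, but in this paper Theorem \ref{thm 1.1} is \emph{deduced from} Theorem \ref{thm 3.3}: its proof reads ``by Lemma \ref{lem 3.2} and Theorem \ref{thm 3.3}, we get the converse inequality.'' Theorem \ref{thm 1.1} is the paper's main new result, answering Gutman--\'{S}piewak's Problem 1, and is not available as prior input. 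So your argument is circular: the hard direction you need, namely $\over\leq\limsup_{\epsilon \to 0}\sup_{\mu \in E(\XX,T)}R_{\mu,L^{\infty}}(\epsilon)/\logf$, is exactly the new content of Theorem \ref{thm 1.1}, and your sandwich only shows that this statement and Theorem \ref{thm 3.3} are equivalent, not that either holds.

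What is missing is an independent proof of the lower bound $\over\leq\limsup_{\epsilon\to0}\sup_{\mu\in E(\XX,T)}\underline h^K_\mu(T,\epsilon)/\logf$ with the supremum over \emph{ergodic} measures. The paper obtains this by citing Shi's Proposition 7.3, a Misiurewicz-type variational principle for the fixed-scale Katok entropy $\underline h^K_\mu(T,\epsilon,\delta_0)$ over ergodic measures, proved without rate distortion theory; combined with the monotonicity $\underline h^K_\mu(T,\epsilon,\delta_0)\leq\underline h^K_\mu(T,\epsilon)$ and the elementary counting bound $\overline h^K_\mu(T,\epsilon)\leq r(T,\XX,d,\epsilon)$, this closes the proof. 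Note that Theorem A, which genuinely is prior, only gives the supremum over all invariant measures, and passing from $\sup_{\mu\in M(\XX,T)}$ to $\sup_{\mu\in E(\XX,T)}$ for a fixed-scale quantity is precisely the nontrivial step you cannot skip (there is no easy ergodic decomposition at fixed $\epsilon$). Your coupling inequality and the universal bound $\overline h^K_\mu(T,\epsilon)\leq r(T,\XX,d,\epsilon)$, which is also how the paper handles the upper bound and the all-invariant-measures version, are correct and reusable, but as structured the proof does not stand on its own.
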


\begin{proof}
Fix $\delta_0 \in (0,1)$, then we have 
	\begin{align*}
	\over&=\limsup_{\epsilon \to 0}\frac{\sup_{\mu \in E(\XX,T)}\underline h_\mu^K(T,\epsilon,\delta_0)}{\logf}, \text{ by \cite[Proposition 7.3]{shi}}\\
	&\leq\limsup_{\epsilon \to 0}\frac{\sup_{\mu \in E(\XX,T)}\underline h_\mu^K(T,\epsilon)}{\logf}\\
	&\leq\limsup_{\epsilon \to 0}\frac{\sup_{\mu \in E(\XX,T)}\overline h_\mu^K(T,\epsilon)}{\logf}.	
\end{align*}
On the other hand,  fix $\epsilon >0$ and $\mu \in M(\XX)$. Let $\delta \in (0,1)$, then $R_\mu^\delta (T,n,\epsilon)\leq r_n(T,d,\epsilon,\XX)$ for  every $n\in \mathbb{N}$, which yields that  $\overline{h}_{\mu}^K(T,\epsilon, \delta)\leq r(T,\XX,d,\epsilon)$  holds for every $\delta \in (0,1).$ Letting $\delta \to 0$ gives $\overline{h}_{\mu}^K(T,\epsilon)\leq r(T,\XX,d,\epsilon)$. Hence,  we finally obtain that 
$$\limsup_{\epsilon \to 0}\frac{\sup_{\mu \in E(\XX,T)}\overline h_\mu^K(T,\epsilon)}{\logf}\leq \over.$$
This completes the proof.
\end{proof}

Now, we are ready to give the proof of Theorem \ref{thm 1.1}.
\begin{proof}[Proof of Theorem \ref{thm 1.1}]
	It suffices to show  show the first equality, and the second one can be obtained in a similar manner.
	By Theorem  A,   it is clear that  
	$$	\over\geq\limsup_{\epsilon \to 0}\frac{\sup_{\mu \in E(\XX,T)}R_{\mu,L^{\infty}}(\epsilon)}{\logf}.$$
	On the other hand, by Lemma \ref{lem 3.2} and Theorem  \ref{thm 3.3}, we get  the converse inequality. 
\end{proof}

Next, we proceed to   give the proof of Theorem \ref{thm 1.3}.

\begin{prop}\label{prop 3.4}
	Let $(\XX,T)$ be a TDS and $\mu \in M(\XX,T)$. Then  for every $\epsilon >0$ and $\mu \in M(\XX,T)$,  we have
	$$R_{\mu,L^{\infty}}(2\epsilon)\leq \inf_{\diam P \leq \epsilon}h_\mu(T,P),$$
	where the infimum ranges over all finite partitions of $\XX$.
\end{prop}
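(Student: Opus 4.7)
The proof plan is to construct, for each finite partition $P$ with $\diam P\leq\epsilon$ and each $n\in\N$, an explicit pair of random variables $(\xi,\eta)$ realising a test for $R_{\mu,L^{\infty}}(2\epsilon,s)$ whose mutual information is controlled by $H_\mu(P^n)$. Granting this, dividing by $n$, letting $n\to\infty$ gives $R_{\mu,L^{\infty}}(2\epsilon,s)\leq h_\mu(T,P)$, and then letting $s\to 0$ and taking the infimum over partitions yields the desired inequality.

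Concretely, enumerate the atoms of $P^n=\bigvee_{j=0}^{n-1}T^{-j}P$ and choose, for each atom $A$, a representative point $x_A\in A$. On the probability space $(\XX,\mu)$, set $\xi(x)=x$ (so $\xi$ has law $\mu$) and define $\eta_k(x)=T^k(x_{A(x)})$ for $0\leq k\leq n-1$, where $A(x)$ is the unique atom of $P^n$ containing $x$. Since $x$ and $x_{A(x)}$ lie in the same atom of $T^{-k}P$, the points $T^k x$ and $T^k x_{A(x)}$ lie in a common atom of $P$, hence
\begin{equation*}
d(T^k\xi,\eta_k)\leq \diam P\leq \epsilon<2\epsilon
\end{equation*}
pointwise for every $k$. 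In particular, the expected number of indices $k$ with $d(T^k\xi,\eta_k)\geq 2\epsilon$ is $0$, which is strictly less than $sn$ for every $s>0$, so the pair $(\xi,\eta)$ is admissible in the definition of $R_{\mu,L^{\infty}}(2\epsilon,s)$.

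Next I estimate the mutual information. By construction, $\eta=(\eta_0,\dots,\eta_{n-1})$ is a deterministic function of the atom $A(\xi)\in P^n$, so $H(\eta\mid A(\xi))=0$ and the data-processing style bound
\begin{equation*}
I(\xi;\eta)\leq H(\eta)\leq H(A(\xi))=H_\mu(P^n)
\end{equation*}
holds. Therefore
\begin{equation*}
R_{\mu,L^{\infty}}(2\epsilon,s)\leq \frac{I(\xi;\eta)}{n}\leq \frac{H_\mu(P^n)}{n}.
\end{equation*}
Letting $n\to\infty$ gives $R_{\mu,L^{\infty}}(2\epsilon,s)\leq h_\mu(T,P)$, then $s\to 0^{+}$ yields $R_{\mu,L^{\infty}}(2\epsilon)\leq h_\mu(T,P)$, and finally taking the infimum over all $P$ with $\diam P\leq\epsilon$ completes the argument.

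The only delicate point is the factor $2$: the $L^{\infty}$ distortion condition uses the strict inequality $d(T^k\xi,\eta_k)\geq \epsilon$, whereas the partition only guarantees $d(T^k\xi,\eta_k)\leq \diam P\leq\epsilon$, so passing to radius $2\epsilon$ (or any radius strictly larger than $\epsilon$) is exactly what lets the bad-index count drop to zero. Everything else is routine: choosing representatives in each atom and bounding the mutual information by the partition entropy.
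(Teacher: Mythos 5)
Your proof is correct and follows essentially the same route as the paper's: the same choice of representatives $x_A$ in the atoms of $P^n$, the same coding $\eta_k = T^k x_{A(\xi)}$, the same observation that the $2\epsilon$-distortion count is identically zero, and the same bound $I(\xi;\eta)\leq H(\eta)\leq H_\mu(P^n)$. Your explicit justification that $T^k\xi$ and $T^k x_{A(\xi)}$ share an atom of $P$, and your remark on why the factor $2$ is needed, are correct refinements of the argument the paper leaves implicit.
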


\begin{proof}
Fix $\epsilon >0$ and $\mu \in M(\XX,T)$,   we can  choose a finite partition $Q$  of   $\XX$ so that 
$\inf_{\diam P \leq \epsilon}\limits h_\mu(T,P)\leq\log\#Q<\infty$. Let   $P$ be a finite partition of $\XX$ with diameter at most $\epsilon$,  and let $\xi$ be a random variable taking values in $\XX$ and obeying $\mu$. For every  $n \in \mathbb{N}$ and  every $A\in P^n$, we choose $x_A\in A$ and define a map  $f: \XX \rightarrow \XX$ by setting  $f(x)=x_A$ if $x\in A$. Put
$\eta={(f(\xi),Tf(\xi),...,T^{n-1}f(\xi))}$, then
$$\mathbb{E}\left( \text{the number of } k\in [0,n-1]~\text{with}~d(T^k\xi,T^kf(\xi))\geq 2\epsilon\right)=0<sn,$$ for any $s>0$. Hence,
	$R_{\mu,L^{\infty}} (2\epsilon,s)\leq \frac{I(\xi;\eta)}{n}\leq\frac{H(\eta)}{n}=\frac{H_\mu(P^n)}{n}$, this implies that  $R_{\mu,L^{\infty}}(2\epsilon,s)\leq h_\mu(T,P)$. Letting $s \to 0$ gives the desired result.	
	
\end{proof}

\begin{proof}[Proof of Theorem \ref{thm 1.3}]
By  \cite[Theorem 3.1]{gs20}, we have 
\begin{align}\label{equ 3.2 }
	\over&=	\limsup_{\epsilon \to 0}\frac{\sup_{\mu \in M(\XX,T)}\limits \inf_{\diam P \leq \epsilon} \limits h_\mu(T,P)}{\logf}\\
	&\geq\sup_{\mu \in M(\XX,T)} \limsup_{\epsilon \to 0}\frac{1}{\logf}\inf_{\diam P \leq \epsilon} \limits h_\mu(T,P)\nonumber
\end{align}
By the fact obtained in \cite[III,B]{lt18}  that  for $1\leq p< \infty$, $R_{\mu,p}(\epsilon)\leq R_{\mu,L^\infty}(\epsilon^{'})$ holds for $0<\epsilon^{'}\leq\epsilon$, we have $R_{\mu,1}(3\epsilon)\leq R_{\mu,L^\infty}(2\epsilon)$. Together with  the Proposition  \ref{prop 3.4}, we obtain that 
for any $\mu \in M(\XX,T)$,
$$ \underline{rdim}_{L^1}(\XX,T,d,\mu)\leq \liminf_{\epsilon \to 0}\frac{1}{\logf}\inf_{\diam P \leq \epsilon} \limits h_\mu(T,P).$$
Therefore, for any $d\in \mathscr{D}(\XX)$,
\begin{align*}
mdim(\XX,T)&\leq \sup_{\mu \in M(\XX,T)} \underline{rdim}_{L^1}(\XX,T,d,\mu) ~\text{by  Corollary \cite[Corollary 1.7]{t20}}\\
 &\leq\sup_{\mu \in M(\XX,T)} \liminf_{\epsilon \to 0}\frac{1}{\logf}\inf_{\diam P \leq \epsilon} \limits h_\mu(T,P) \\
 &\leq \sup_{\mu \in M(\XX,T)}\limsup_{\epsilon \to 0}\frac{1}{\logf}\inf_{\diam P \leq \epsilon} \limits h_\mu(T,P)\\
 &\leq \over~~~~ \text{by ~(\ref{equ 3.2 })}.
\end{align*}
Using the fact obtained in \cite[Theorem 1.8]{t20}, if $(\XX,T)$ admits marker property, then there exists $d\in \mathscr{D}(\XX)$ such that $mdim(\XX,T)=\over$. 
This implies that 
\begin{align*}
\mdim(\XX,T)&=\min_{d\in \mathscr{D}(\XX)} \sup_{\mu \in M(\XX,T)}\liminf_{\epsilon \to 0}\frac{1}{\logf}\inf_{\diam P \leq \epsilon} \limits h_\mu(T,P),\\
&=\min_{d\in \mathscr{D}(\XX)} \sup_{\mu \in M(\XX,T)}\limsup_{\epsilon \to 0}\frac{1}{\logf}\inf_{\diam P \leq \epsilon} \limits h_\mu(T,P).
\end{align*}

By the Lindenstrauss and Weiss's  classical inequality \cite{lw00},  $\mdim(\XX,T) \leq \under
 \leq \over$  for any $d\in \mathscr{D}(\XX)$, then for any $d\in \mathscr{D}^{'}(\XX)$, we have  $\mdim(\XX,T)=\under=\over$.  We finally deduce that for any $d\in \mathscr{D}^{'}(\XX)$,
\begin{align*}
	\under&=\over\\
	&=\sup_{\mu \in M(\XX,T)}\liminf_{\epsilon \to 0}\frac{1}{\logf} \inf_{\diam P \leq \epsilon} \limits h_\mu(T,P)\\
	&=\sup_{\mu \in M(\XX,T)}\limsup_{\epsilon \to 0}\frac{1}{\logf} \inf_{\diam P \leq \epsilon} \limits h_\mu(T,P)\\
	&=\liminf_{\epsilon \to 0} \sup_{\mu \in M(\XX,T)}\frac{1}{\logf} \inf_{\diam P \leq \epsilon} \limits h_\mu(T,P)\\
	&=\limsup_{\epsilon \to 0} \sup_{\mu \in M(\XX,T)}\frac{1}{\logf} \inf_{\diam P \leq \epsilon} \limits h_\mu(T,P),
\end{align*}
where the last two equalities hold by \cite[Theorem 3.1]{gs20}.
\end{proof}

For  a TDS admits marker property,  Theorem 1.2 shows that for some ``nice" metrics, the variational principles  are still  valid  if we  change the order of $\sup$ and $\limsup$ (or $\liminf$).  It is  not clear that whether we can drop the marker property imposed on the topological dynamical system or not,  and removing the  condition  requires us to answer  a central problem  that if  for every topological dynamical system,   there exists a metric $d$ such that   $mdim(\XX,T)=\over$.
This open problem  was  also mentioned in \cite{glt16,lt19,t20}.

Finally, we attempt to introduce the notion of maximal metric mean dimension measure analogous to the  classical notion of maximal entropy measure related to topological entropy.  We begin this new concept with the  following example.
\begin{ex}
Let $\sigma:[0,1]^{\mathbb{Z}}\rightarrow [0,1]^{\mathbb{Z}}$ be the (left) shift on alphabet $[0,1]$, where $[0,1]$ is the unit interval with the standard metric. Equipped 
$[0,1]^{\mathbb{Z}}$ with a metric given by 
$$d(x,y)=\sum_{n\in \mathbb{Z}}2^{-|n|}|x_n-y_n|.$$
Let $\mu=\mathcal{L}^{\otimes \mathbb{Z}}$, where $\mathcal{L}$ is the Lebesgue measure on $[0,1]$. 

Let $\XX=[0,1]^{\mathbb{Z}}$ and $T=\sigma$. It is well-known that ${{mdim}}_M(T,\XX,d)=1$, see \cite[Section II, E. Example]{lt18} for more details.  Shi \cite[Exmaple 6.1]{shi} showed that $\lim_{\epsilon \to 0}\limits  \frac{\overline h_{\mu}^{BK}(T,\epsilon)}{\log \frac{1}{\epsilon}}=\lim_{\epsilon \to 0}\limits  \frac{\underline h_{\mu}^{BK}(T,\epsilon)}{\log \frac{1}{\epsilon}}=1$. By \cite[Proposition 3.1]{ycz22}, we know
$\lim_{\epsilon \to 0}\limits\frac{\inf_{\diam P \leq \epsilon}\limits h_\mu(T,P)}{\log \frac{1}{\epsilon}}=\lim_{\epsilon \to 0}\limits  \frac{\overline h_{\mu}^{BK}(T,\epsilon)}{\log \frac{1}{\epsilon}}=\lim_{\epsilon \to 0}\limits  \frac{\overline h_{\mu}^{BK}(T,\epsilon)}{\log \frac{1}{\epsilon}}=1$.
This shows that
\begin{align*}
	{{mdim}}_M(T,\XX,d)&=\lim_{\epsilon \to 0}\limits\frac{\inf_{\diam P \leq \epsilon}\limits h_\mu(T,P)}{\log \frac{1}{\epsilon}}\\
	&=\lim_{\epsilon \to 0}\limits  \frac{\overline h_{\mu}^{BK}(T,\epsilon)}{\log \frac{1}{\epsilon}}=\lim_{\epsilon \to 0}\limits \frac{\underline h_{\mu}^{BK}(T,\epsilon)}{\log \frac{1}{\epsilon}}=1.
\end{align*}

\end{ex}
Unlike the measure-theoretical entropy used  to establish variational principle  for topological entropy,   there are  abundant  choices that can be considered as a object to  establish a  variational principle for metric mean dimension \cite{shi}, for example Brin-Katok local entropy, Katok entropy,...  Therefore, to inject ergodic theoretic ideas into mean dimension theory,  a  reasonable quantity related to measure-theoretical  entropy is crucial. This leads to the following

  \begin{df}
A non-negative  real-valued function $F(\mu,\epsilon)$  defined on  $M(\XX,T)\times \mathbb{R}_+$ (or $E(\XX,T)\times \mathbb{R}_+$) is said to be a \emph{candidate}  if for any fixed $\mu$, $F(\mu,\epsilon)$ is non-decreasing  as $\epsilon$  decreases and $h_\mu(T)=\lim_{\epsilon \to 0}\limits F(\mu,\epsilon)$,  and we define  \emph{upper measure-theoretical metric mean dimension of $\mu$} as  $$\overline{mdim}_M(T,\XX,\mu)=\limsup_{\epsilon \to 0}\limits  \frac{F(\mu,\epsilon)}{\logf}.$$
  \end{df}

	  Similarly, one can  define lower measure-theoretical metric mean dimension $ \underline{mdim}_M(T,\XX,\mu)$ by replacing $\limsup_{\epsilon \to 0}\limits$ by  $\liminf_{\epsilon \to 0}\limits$. If $\underline{mdim}_M(T,\XX,\mu)\\=\overline{mdim}M(T,\XX,\mu)$, we call the common value \emph{measure-theoretical metric mean dimension of $\mu$}. 
   Given  $\mu \in E(\XX,T)$, such candidates  can be  $\overline{h}^{K}_\mu(T,\epsilon,\delta)$, $\underline{h}^{K}_\mu(T,\epsilon,\delta)$,  $\overline{h}_{\mu}^{BK}(T, \epsilon)$, $\underline{h}_{\mu}^{BK}(T, \epsilon)$ $R_{\mu,L^{\infty}}(\epsilon)$, readers can turn to    \cite[Subsection 2.2]{ycz22} for more candidates.

The quantity   (or we refer to ``speed")   $\overline{mdim}(T,\XX,\mu)$ can be interpreted  as  how fast the  candidate $F(\mu,\epsilon)$ approximate the (infinite) measure-theoretical entropy $h_\mu(T)$ as $\epsilon \to 0$. Namely, when $\epsilon>0$ is sufficiently small, we may  approximate $F(\mu,\epsilon)$  as 
$$F(\mu,\epsilon) \approx \overline{mdim}_M(T,\XX,\mu)\logf.$$

\begin{df}
	Let $(\XX,T)$ be a TDS and $d\in  \mathscr{D}(\XX)$.
	Given a candidate $F(\mu,\epsilon)$ satisfying $$\over=\sup_{\mu \in M(\XX,T)}\overline{mdim}_M(T,\XX,\mu),$$
	and we call $\mu$  a \emph{maximal upper (resp. lower) metric mean dimension measure} if $\over=\overline{mdim}_M(T,\XX,\mu)$ (resp. $\under=\overline{mdim}_M(T,\mu,\XX,\mu)$). The set of  all maximal upper (resp. lower) metric mean dimension measures is denoted by 
	$\overline{M}_{max}(T,\XX,d)$ (resp.  $\underline{M}_{max}(T,\XX,d)$).
\end{df}

Obviously,    $\overline{M}_{max}(T,\XX,d)$ and  $\underline{M}_{max}(T,\XX,d)$   depend on the metric  and the candidate that we choose.

\begin{prop}
Let $(\XX,T)$ be a TDS and $d\in  \mathscr{D}(\XX)$.
Given a candidate $F(\mu,\epsilon)$ satisfying
$$\over=\sup_{\mu \in M(\XX,T)}\limsup_{\epsilon \to 0}\limits  \frac{F(\mu,\epsilon)}{\logf}$$
and
$$\under=\sup_{\mu \in M(\XX,T)}\liminf_{\epsilon \to 0}\limits  \frac{F(\mu,\epsilon)}{\logf}.$$
Then the following statements hold
\begin{enumerate}
\item If $\under=\over$, then $\underline{M}_{max}(T,\XX,d)\subset \overline{M}_{max}(T,\XX,d)$.
\item  If  for any $\mu \in \overline{M}_{max}(T,\XX,d)$  satisfies $\limsup_{\epsilon \to 0}\limits  \frac{F(\mu,\epsilon)}{\logf}=\liminf_{\epsilon \to 0}\limits  \frac{F(\mu,\epsilon)}{\logf}$, then  $\underline{M}_{max}(T,\XX,d)\supset \overline{M}_{max}(T,\XX,d).$
\item  If for every  fixed $\epsilon>0$,  $F(\mu,\epsilon)$ is a concave function on $M(\XX,T)$ and  $\underline{M}_{max}(T,\XX,d)\not=\emptyset$, then the set $\underline{M}_{max}(T,\XX,d)$ is a convex subset of $M(\XX,T)$. Additionally, if $\under=\infty$, then $\underline{M}_{max}(T,\XX,d)\not=\emptyset$.
\end{enumerate}

\end{prop}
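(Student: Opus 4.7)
The plan is to handle (1) and (2) by sandwich arguments using $\liminf \leq \limsup$ together with the inequalities between $\under$ and $\over$, while (3) requires combining concavity with a direct construction of a maximizer when $\under=\infty$.

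For (1), fix $\mu \in \underline{M}_{max}(T,\XX,d)$ so that $\liminf_{\epsilon\to 0} F(\mu,\epsilon)/\logf = \under$. The chain
$$\under=\liminf_{\epsilon\to 0}\frac{F(\mu,\epsilon)}{\logf}\leq \limsup_{\epsilon\to 0}\frac{F(\mu,\epsilon)}{\logf}\leq\sup_{\nu\in M(\XX,T)}\limsup_{\epsilon\to 0}\frac{F(\nu,\epsilon)}{\logf}=\over$$
together with $\under=\over$ forces $\limsup_{\epsilon\to 0} F(\mu,\epsilon)/\logf=\over$, i.e., $\mu\in\overline{M}_{max}(T,\XX,d)$. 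For (2), pick any $\mu\in\overline{M}_{max}(T,\XX,d)$; by hypothesis $\limsup=\liminf$ at $\mu$, so $\over=\limsup=\liminf\leq\under\leq\over$ (the middle inequality using the defining sup for $\under$, and $\under\leq\over$ being the trivial pointwise bound), which collapses to $\liminf_{\epsilon\to 0} F(\mu,\epsilon)/\logf=\under$ and hence $\mu\in\underline{M}_{max}(T,\XX,d)$.

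For the convexity claim in (3), the first step is to observe that for every sufficiently small $\epsilon>0$, $G_\epsilon(\mu):=F(\mu,\epsilon)/\logf$ is concave in $\mu$, being the hypothesized concave $F(\cdot,\epsilon)$ divided by the positive constant $\logf$. Applying concavity of $G_\epsilon$ to a convex combination $t\mu_1+(1-t)\mu_2$ and then taking $\liminf_{\epsilon\to 0}$, the superadditivity $\liminf(a_\epsilon+b_\epsilon)\geq \liminf a_\epsilon+\liminf b_\epsilon$ yields concavity of $g(\mu):=\liminf_{\epsilon\to 0} G_\epsilon(\mu)$. Since $\underline{M}_{max}(T,\XX,d)=\{\mu:g(\mu)\geq\under\}$ is an upper level set of the concave function $g$ (using that $g\leq\under$ pointwise), it is convex whenever nonempty.

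The delicate part, and the main obstacle, is non-emptiness of $\underline{M}_{max}(T,\XX,d)$ when $\under=\infty$: we have no a priori upper semicontinuity of $g$ on the weak-$*$ compact convex space $M(\XX,T)$, so the usual extremal existence argument is not directly available. The plan is to construct a maximizer by an explicit infinite convex combination. Since $\sup_\mu g(\mu)=\infty$, pick $\mu_n\in M(\XX,T)$ with $g(\mu_n)\geq n\cdot 2^n$ and set
$$\nu:=\sum_{n=1}^{\infty}2^{-n}\mu_n,$$
which lies in $M(\XX,T)$ since the invariant probability measures form a norm-closed convex subset of $M(\XX)$. For each $n$, write $\nu=2^{-n}\mu_n+(1-2^{-n})\tilde\nu_n$ with $\tilde\nu_n\in M(\XX,T)$; concavity of $G_\epsilon$ together with $F\geq 0$ yields $G_\epsilon(\nu)\geq 2^{-n}G_\epsilon(\mu_n)$. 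Taking $\liminf$ in $\epsilon$ gives $g(\nu)\geq 2^{-n}g(\mu_n)\geq n$ for every $n$, so $g(\nu)=\infty=\under$, placing $\nu$ in $\underline{M}_{max}(T,\XX,d)$.
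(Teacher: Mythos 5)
Your proof is correct and follows essentially the same route as the paper's: the same sandwich chains for (1) and (2), concavity of the $\liminf$ for convexity in (3), and the same infinite convex combination $\sum_n 2^{-n}\mu_n$ for non-emptiness when the lower metric mean dimension is infinite. The only differences are cosmetic (phrasing convexity via superlevel sets of a concave function, and bounding $g(\nu)$ by a single term $2^{-n}g(\mu_n)$ rather than the partial sum), and your version even fixes the paper's typo of writing $\over=\infty$ where $\under=\infty$ is meant.
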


\begin{proof}
	(1) Without loss of generality, we assume that $\underline{M}_{max}(T,\XX,d)\not= \emptyset$.  Let $\mu \in \underline{M}_{max}(T,\XX,d)$, then 
	\begin{align*}
		\under&=\liminf_{\epsilon \to 0}\limits  \frac{F(\mu,\epsilon)}{\logf}\\
		&\leq \limsup_{\epsilon \to 0}\limits  \frac{F(\mu,\epsilon)}{\logf}\leq \over.
	\end{align*}
This implies that  $\underline{M}_{max}(T,\XX,d)\subset \overline{M}_{max}(T,\XX,d)$. 

(2)Let $\mu \in \overline{M}_{max}(T,\XX,d)$  with  $\limsup_{\epsilon \to 0}\limits  \frac{F(\mu,\epsilon)}{\logf}=\liminf_{\epsilon \to 0}\limits  \frac{F(\mu,\epsilon)}{\logf}$, then 
	\begin{align*}
	\over&=\limsup_{\epsilon \to 0}\limits  \frac{F(\mu,\epsilon)}{\logf}= \liminf_{\epsilon \to 0}\limits  \frac{F(\mu,\epsilon)}{\logf}\\
	&\leq \under\leq \over,
\end{align*}
 which yields that $\underline{M}_{max}(T,\XX,d)\supset \overline{M}_{max}(T,\XX,d).$
 
(3)  Let $\underline{mdim}_M(T,\XX,\mu)=\liminf_{\epsilon \to 0}\limits  \frac{F(\mu,\epsilon)}{\logf}$. Since $F(\mu,\epsilon)$ is a concave function on $M(\XX,T)$ for any fixed  $\epsilon>0$,   then $\underline{mdim}_M(T,\XX,\mu)$ is also concave. Let $\mu_1,\mu_2\in\underline{M}_{max}(T,\XX,d)$  and $p \in [0,1]$, this yields that
\begin{align*}
	\under&=p\underline{mdim}_M(T,\XX,\mu_1)+(1-p)\underline{mdim}_M(T,\XX,\mu_2)\\
	&\leq \underline{mdim}_M(T,\XX,p\mu_1+(1-p)\mu_2)\leq \under.
\end{align*}
It follows that $p\mu_1+(1-p)\mu_2 \in \underline{M}_{max}(T,\XX,d)$, which shows $\underline{M}_{max}(T,\XX,d)$ is convex.

If $\over=\infty$, then for each $n\in \mathbb{N}$, we  can choose $\mu_n\in M(\XX,T)$ such that $\underline{mdim}_M(T,\XX,\mu_n)>2^n$.
Set $\mu=\sum_{n=1}^{\infty}\limits \frac{1}{2^n}\mu_n=\sum_{n=1}^{N}\limits \frac{1}{2^n}\mu_n+\frac{1}{2^N}\nu_N$ for every  $N\in \mathbb{N}$, where $\nu_N \in M(\XX,T)$. Using the  concavity  of  $\underline{mdim}_M(T,\XX,\mu)$ with respect  to $\mu$, we have 
\begin{align*}
	 \underline{mdim}_M(T,\XX,\mu)\geq \sum_{n=1}^N \frac{1}{2^n}\underline{mdim}_M(T,\XX,\mu_n)>N.
\end{align*}
Letting $N \to \infty$ gives $\underline{mdim}_M(T,\XX,\mu)=\infty$, which   shows that $\underline{M}_{max}(T,\XX,d)\\
\not=\emptyset$.

\end{proof}
We finally end up this paper with a question as follows.

 \textbf{Question 1}  For every  topological dynamical system $(\XX,T)$, can we choose    proper metric $d$ and  proper   candidate $F(\mu,\epsilon)$ such that there exists $\mu \in M(\XX,T)$(or $E(\XX,T)$)  satisfying
\begin{align*}
\over&=\limsup_{\epsilon \to 0}\limits  \frac{F(\mu,\epsilon)}{\logf}\\
\under&=\liminf_{\epsilon \to 0}\limits  \frac{F(\mu,\epsilon)}{\logf}?
\end{align*}

In other words, such a metric $d$ and $\mu$  have the same speed  that  respectively approximate  infinite topological entropy and infinite measure-theoretical entropy.

\section*{Acknowledgement} 

\noindent The work was supported by the
National Natural Science Foundation of China (Nos.12071222 and 11971236), China Postdoctoral Science Foundation (No.2016M591873),
and China Postdoctoral Science Special Foundation (No.2017T100384). The work was also funded by the Priority Academic Program Development of Jiangsu Higher Education Institutions.  We would like to express our gratitude to Tianyuan Mathematical Center in Southwest China(11826102), Sichuan University and Southwest Jiaotong University for their support and hospitality.


\end{document}